\newtheorem{theorem}{Theorem}[section]
\newtheorem{lemma}[theorem]{Lemma}
\newtheorem*{Acknowledgment}{\textnormal{\textbf{Acknowledgment}}}
\theoremstyle{definition}
\newtheorem{definition}[theorem]{Definition}
\newtheorem{corollary}[theorem]{Corollary}
\numberwithin{equation}{section}
\newcommand{\beqa}{\begin{eqnarray*}}
\newcommand{\eeqa}{\end{eqnarray*}}
\newcommand{\beqn}{\begin{eqnarray}}
\newcommand{\eeqn}{\end{eqnarray}}
\renewcommand{\a}{\alpha}
\newcounter{cnt1}
\newcounter{cnt2}
\newcounter{cnt3}
\newcommand{\blr}{\begin{list}{$($\roman{cnt1}$)$}
        {\usecounter{cnt1} \setlength{\topsep}{0pt}
                \setlength{\itemsep}{0pt}}}
\newcommand{\bla}{\begin{list}{$($\alph{cnt2}$)$}
        {\usecounter{cnt2} \setlength{\topsep}{0pt}
                \setlength{\itemsep}{0pt}}}
\newcommand{\bln}{\begin{list}{$($\arabic{cnt3}$)$}
        {\usecounter{cnt3} \setlength{\topsep}{0pt}
                \setlength{\itemsep}{0pt}}}
\newcommand{\el}{\end{list}}
\newtheorem{thm}{Theorem}
\newtheorem{Def}[thm]{Definition}
\newtheorem{rem}[thm]{Remark}
\newcommand{\Rem}{\begin{rem} \rm}
\newcommand{\bdfn}{\begin{Def} \rm}
\newcommand{\edfn}{\end{Def}}
\title{Two aspects of small diameter properties} 
\author[ S. Basu , S. Seal ]
	{Sudeshna Basu$^{1}$, Susmita Seal$ ^{2}$ }
\address{{$^{1}$}   Sudeshna Basu,
		Department of Mathematics and Statistics, 
				Loyola University, 
				Baltimore, MD 21210, USA 
		}
	\email{sudeshnamelody@gmail.com}
\address {{$^{2}$} Susmita Seal, 
		Department of Mathematics, ,
		Ramakrishna Mission Vivekananda Education and Research Institute , 
		Belur Math,  Howrah 711202,
		West Bengal, India}
	\email{susmitaseal1996@gmail.com}	
\subjclass{46B20, 46B28}
\keywords{Slices, Huskable, Dentable, Small Combination of Slices, K$\ddot{o}$the-Bochner, Separably determined.}
\date{}
\begin{document}
\maketitle
\begin{abstract}
In this short note, we study two different geometrical aspects of Banach spaces with  small diameter properties, namely the Ball Dentable Property ($BDP$), Ball Huskable Property ($BHP$) and Ball Small Combination of slice Property ($BSCSP$). We show that $BDP$, $BHP$ and $BSCSP$ are separably determined properties.
We also  explore the stability of these properties  over K$\ddot{o}$the Bochner spaces.

\end{abstract}
\section{Introduction}
Throughout this work, we consider $X$, a {\it real} Banach space.
 We denote by $X^*$, the  dual of $X.$  We denote the closed unit ball with center at $0$, by $B_X $, the unit sphere by $S_X$ and the closed ball of radius $r >0$ and center $x$ by $B_X(x, r).$
  We refer to the monograph \cite{Br} for notions of convexity theory that we will be using here. Let $X$ be a Banach space. A slice of a bounded set $C\subset X$ is defined  by 
$$S(C, x^*, \a) = \{x \in C : x^*(x) > \mbox{sup}~ x^*(C) - \a \}$$
where $x^*\in X^*$ and $\a > 0.$
 We assume without loss of generality that $\|x^*\| = 1$. Analogously one can define $w^*$-slices in $X^*$. 
 \begin{definition}
 	\cite{BS}
 	A Banach space $X$ has 
 	\begin{enumerate}
 		\item   Ball Dentable Property ($BDP$) if $B_X$ has  
 		slices of arbitrarily small diameter. 
 		\item   Ball Huskable Property ($BHP$) if $B_X$ 
 		has nonempty relatively weakly open subsets of arbitrarily small diameter.
 		\item   Ball Small Combination of Slice Property ($BSCSP$) 
 		if  $B_X$ has convex combination of slices of arbitrarily 
 		small diameter.
 	\end{enumerate}
\end{definition}
One can analogously define $w^*$-$BDP$, $w^{*}$-$BHP$ and $w^{*}$-$BSCSP$ in a dual space where slices and weakly open sets are replaced by $w^*$-slices and $w^*$-open sets  respectively.
We have the following implications. In general, none of the reverse implications hold good. For details, see \cite{BS}.
$$ BDP \  \ \Longrightarrow \ BHP \ \Longrightarrow \  \ BSCSP$$ $   \quad \quad \quad\quad\quad\quad\quad\quad\quad\quad\quad \Big \Uparrow \quad \quad\quad\quad\quad \Big \Uparrow \quad \quad\quad\quad\quad \Big \Uparrow$  $$ w^*-BDP \Longrightarrow  w^*-BHP \Longrightarrow  w^*-BSCSP$$

Banach spaces with Radon Nikodym Property ($RNP$) has been an important area of research in geometry of Banach spaces. The reason being, Banach spaces with $RNP$ has nice characterizations in terms of geometry of the underlying space. For more details, see \cite{Br}, \cite{DU}. One equivalent characterization of a Banach space  $X$ with $RNP$ is that that every closed bounded convex subset of $X$ has slices with arbitrarily small diameter. Two closely related properties are the Point of Continuity Property ($PCP$), namely,  every closed bounded subset of $X$  has nonempty relatively weakly open subsets with arbitrarily small diameter and Strong Regularity ($SR$) namely,  every closed, convex and bounded subset of $X$ has convex combination of slices with  arbitrarily small diameter. For more details, see \cite{Bj}, \cite{GGMS} and \cite{GMS}. A slice of $B_X$ is a weakly open subset of $B_X$. Also Bourgain's Lemma \cite{Ro} tells us any nonempty weakly open subset of a bounded convex set $C$ contains a convex combination of slices of $C$.  It is clear from the definitions above, that  $RNP$ implies $BDP$, $PCP$ implies $BHP$ and $SR$ implies $BSCSP$. So we have,
 \newpage
 $$RNP \ \Longrightarrow \  PCP  \ \Longrightarrow  \ SR$$ $   \quad\quad\quad \quad\quad\quad\quad\quad\quad\quad\quad \quad \quad \Big \Downarrow   \quad \quad\quad\quad \Big \Downarrow  \quad \quad \quad\quad \Big \Downarrow$  $$ \quad BDP \ \Longrightarrow \  BHP \ \Longrightarrow  \ BSCSP$$
 So, in a sense these  three properties namely the $BDP, BHP$ and $BSCSP$ are the localized version (to the closed unit ball) of the 
 $RNP, PCP$ and  $SR.$ The small diameter properties  has nice geometric properties in terms of stability with respect to $ l_p$ sums ($(1\leqslant p < \infty )$) of Banach spaces and Lebesgue-Bochner spaces.
 They satisfy  the three space property  under certain assumption on the quotient space. The small diameter properties are inherited by certain ideals of a Banach space. Also, if the duals of certain ideals of a Banach space  have the $w^*$-version of these properties, then the dual of the Banach space also has these properties.  For more details, see \cite{BR}, \cite{Ba},  \cite{BS} and \cite{SBBGVY}.

 It is well known that all the properties $RNP$, $PCP$ and $SR$ are hereditary. 
However, that is not true for $BDP$, $BHP$ and $BSCSP$. For example,  $l_1\oplus_1 c_0$ has $BDP$, $BHP$ and $BSCSP$ \cite{BS}, but its subspace $c_0$ which in particular is also separable,  does not have  $BDP$, $BHP$ and $BSCSP$ \cite{La}. The next best thing we can expect is that the spaces with small diameter properties will satisfy the separably determined property.
\begin{definition}
A property (P) in non separable Banach space $X$ is called a separably determined property for $X$ if for every separable subspace $Z$ in $X,$ there exists a separable subspace $Z'$ in $X$ such that $Z\subset Z'$ and $Z'$ has the property (P) in $X.$ 
\end{definition}
In this work, we prove that all these three small diameter properties are separably determined properties.

\begin{definition}
\cite{Li} Let $(\Omega,\mathcal{A},\mu)$ be a complete, $\sigma$ finite measure space. Then  K$\ddot{o}$the function space over $(\Omega,\mathcal{A},\mu)$ is a Banach space $(E,\|.\|_E)$ of real valued measurable functions on 
$\Omega$ modulo equality $\mu$ a.e. satisfying 
\begin{enumerate}
\item $\chi_A \in E$ for all $A\in \mathcal{A}$ with $\mu(A)<\infty$
\item f is $\mu$ integrable over $A$ for all $f\in E$, for all $A\in \mathcal{A}$ with $\mu(A)<\infty.$ 
\item if g is measurable, $f\in E$ such that $|g(t)|\leqslant |f(t)|$ $\mu$ a.e. then $g\in E$ and $\|g\|_E\leqslant \|f\|_E$
\end{enumerate}
\end{definition}

A function $f:\Omega \rightarrow X$ is called simple if $f=\sum_{i=1}^{n} x_i \chi_{A_i}$ where $ A_i \in \mathcal{A}$ are disjoint with $\mu (A_i)<\infty$ and $ 0 \neq x_i  \in X$ $\forall i=1,2,\ldots,n.$ If there exists a sequence of simple functions $
\{f_n\}$ such that $\lim \limits_{n} \|f_n(t)-f(t)\|=0$ $\mu$  a.e., then $f$ is called Bochner measurable.

\begin{definition}
\cite{Li} For every K$\ddot{o}$the function space $E$ over complete, $\sigma$ finite measure space $(\Omega,\mathcal{A},\mu)$ and Banach space $X$, the  K$\ddot{o}$the-Bochner space is given by  
$$ E(X)=\{ f : f:\Omega \rightarrow X \ \mathrm{is} \ \mathrm{Bochner} \ \mathrm{measurable} \ \mathrm{and} \  \|.\| \circ f\in E\}$$ is a Banach space with norm $\|f\|_{E(X)} = \| (\|.\| \circ f) \|_E.$
 \end{definition}
 An easy example of such spaces is Lebesgue-Bochner space. For more details, see \cite{Li}.
In this work we explore  the stability of the above mentioned small diameter properties for  K$\ddot{o}$the-Bochner spaces.
We prove that $X$ has $BDP$ ($BHP$) if the K$\ddot{o}$the-Bochner space has $BDP$ ($BHP$). The question is still open for $BSCSP$.

\section{Main results}
We first prove that the small diameter properties are all separably determined property.
We need the following definition and result for our discussion.
\begin{definition}
	\cite{LO}
	Let $X$ be a Banach space and $Y$ be a subspace of $X.$
	An operator $f : Y^* \rightarrow X^*$ is called a  Hahn Banach extension operator  if it satisfies following two conditions : 
	\begin{enumerate}
		\item $(fy^*)|_Y = y^*$ $\forall y^* \in Y^*$
		\item $\Vert fy^* \Vert = \Vert y^* \Vert $ $\forall y^* \in Y^*$
	\end{enumerate}
\end{definition}

\begin{lemma} \label{sep lem}
	\cite[Lemma 4.3]{HWW}
	If $X$ is a Banach space, $L$ a separable subspace of $X,$ and $F$ a separable subspace of $X^*,$ then $X$ has a separable subspace $M$ containing $L$ which admits a Hahn-Banach extension operator $T:M^*\rightarrow X^*$ satisfying $TM^*\supset F.$
\end{lemma}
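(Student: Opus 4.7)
The plan is to build $M$ as the norm-closure of an increasing chain of separable subspaces $M_0 \subseteq M_1 \subseteq \cdots$ with $M_0 = L$, constructed in parallel with an increasing chain of separable subspaces $G_0 \subseteq G_1 \subseteq \cdots$ of $X^*$ with $G_0 = F$, arranged so that on $G := \overline{\bigcup_n G_n}$ the restriction map $R : G \to M^*$ defined by $R(x^*) = x^*|_M$ becomes a surjective linear isometry. Once this is achieved, $T := R^{-1} : M^* \to X^*$ is the required Hahn-Banach extension operator, and $TM^* = G \supseteq F$ automatically.

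I would alternate two enlargement steps. To pass from $(M_n, G_n)$ to $M_{n+1}$: fix a countable norm-dense sequence $(g_k)_k$ in $G_n$ and, for each $k$, a sequence $(z_{k,j})_j \subseteq B_X$ with $g_k(z_{k,j}) \to \|g_k\|$ as $j \to \infty$; let $M_{n+1}$ be the closed linear span of $M_n \cup \{z_{k,j} : k,j \in \mathbb{N}\}$. To pass from $(M_{n+1}, G_n)$ to $G_{n+1}$: fix a countable norm-dense sequence $(\varphi_k)_k$ in $M_{n+1}^*$ and pick, by Hahn-Banach, norm-preserving extensions $\widetilde{\varphi}_k \in X^*$; let $G_{n+1}$ be the closed linear span of $G_n \cup \{\widetilde{\varphi}_k : k \in \mathbb{N}\}$. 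Both $M$ and $G$ are then separable, with $L \subseteq M$ and $F \subseteq G$.

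Checking that $R$ is an isometry is the easy half: $R$ is a contraction in general, while for any $g$ in the dense subset $\bigcup_n G_n$, the enlargement step producing $M_{n+1}$ ensures $\sup_{x \in B_{M_{n+1}}} |g(x)| = \|g\|$, so $\|R(g)\| = \|g\|$; continuity extends this to all of $G$.

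The main obstacle is showing that $R$ is surjective. Given $\varphi \in M^*$, the natural candidate preimage is built by picking at each stage $n$ an element $\psi_{k_n}$ of the dense sequence in $M_{n+1}^*$ at distance $< 1/n$ from $\varphi|_{M_{n+1}}$, and considering its HB extension $\widetilde{\psi}_{k_n} \in G_{n+1} \subseteq G$. Proving that $(\widetilde{\psi}_{k_n})$ is norm-Cauchy in $X^*$ is the delicate point: by the isometry of $R$ this reduces to showing $(\widetilde{\psi}_{k_n}|_M)$ is Cauchy in $M^*$, but the construction as stated gives small error only on $M_{n+1}$ rather than uniformly on $B_M$. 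The standard remedy is to refine the bookkeeping so that the dense sequence chosen in $M_n^*$ at each stage also approximates, within the prescribed precision, the restrictions to $M_n$ of the HB extensions selected at all earlier stages; this cascading control forces the desired uniform estimate. The norm-limit then lies in $G$, restricts to $\varphi$ on $M$, and gives surjectivity, so $T := R^{-1}$ is the desired Hahn-Banach extension operator.
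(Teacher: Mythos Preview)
The paper does not prove this lemma; it is quoted from \cite{HWW} and used as a black box, so there is no in-paper argument to compare against. Your back-and-forth scheme is exactly the standard skeleton behind the cited result, and your treatment of the isometry of $R\colon G\to M^*$ is correct.

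You are also right that surjectivity of $R$ is the real obstacle and that the naive construction does not deliver it: a Hahn--Banach extension $\widetilde\psi$ manufactured from $M_{n+1}$ is completely uncontrolled on $M_m\setminus M_{n+1}$ for $m>n+1$, so agreement of two such extensions on $M_{n+1}$ does not force closeness on all of $M$. Your proposed ``cascading'' refinement, however, tightens control in the wrong direction. Requiring the dense set in $M_n^*$ to approximate restrictions to $M_n$ of \emph{earlier} Hahn--Banach extensions only improves agreement on the smaller spaces $M_k$ with $k\le n$, which was never the problem; it does nothing to bound $\widetilde\psi_{k_n}$ on $M_m$ for $m>n+1$. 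Concretely, for $m>n$ one has $\widetilde\psi_{k_m}-\widetilde\psi_{k_n}\in G_{m+1}$, and since $M_{m+2}$ norms $G_{m+1}$ the isometry gives $\|\widetilde\psi_{k_m}-\widetilde\psi_{k_n}\|=\|(\widetilde\psi_{k_m}-\widetilde\psi_{k_n})|_{M_{m+2}}\|$; your bookkeeping controls this difference only on $M_{n+1}$, not on $M_{m+2}$, so the Cauchy claim is unproved. The argument in \cite{HWW} organises the inductive step more carefully to genuinely produce, for each $\varphi\in M^*$, a sequence in $G$ whose restrictions to $M$ converge to $\varphi$; it is worth consulting the source for the precise mechanism rather than leaving this step at a sketch.
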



\begin{theorem}
	$BSCSP$ is a separably determined property.
\end{theorem}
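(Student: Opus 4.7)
The plan is to assume $X$ has $BSCSP$ and, given any separable $Z\subseteq X$, to manufacture a separable $Z'\supseteq Z$ which still has $BSCSP$. For each $n\in\N$ the hypothesis on $X$ furnishes a finite convex combination of slices of $B_X$,
$$C_n=\sum_{i=1}^{k_n}\la_i^{(n)}\,S(B_X,x_i^{*,n},\a_i^{(n)}),$$
with $\|x_i^{*,n}\|=1$, $\la_i^{(n)}\geqslant 0$, $\sum_i\la_i^{(n)}=1$ and $\mathrm{diam}(C_n)<1/n$. Collecting all the countably many witness functionals, set $F=\ov{\mathrm{span}}\{x_i^{*,n}:n\in\N,\,1\leqslant i\leqslant k_n\}\subseteq X^*$, which is separable.

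Next I would apply Lemma~\ref{sep lem} with $L:=Z$ and this $F$ to obtain a separable subspace $Z'\supseteq Z$ of $X$ together with a Hahn--Banach extension operator $T:(Z')^*\to X^*$ satisfying $T((Z')^*)\supseteq F$. For each pair $(n,i)$ pick $y_i^{*,n}\in(Z')^*$ with $Ty_i^{*,n}=x_i^{*,n}$. Property~(2) of $T$ forces $\|y_i^{*,n}\|=\|x_i^{*,n}\|=1$, hence $\sup_{z\in B_{Z'}}y_i^{*,n}(z)=1$, so the slice $S(B_{Z'},y_i^{*,n},\a_i^{(n)})$ is non-empty; property~(1) gives $y_i^{*,n}=x_i^{*,n}|_{Z'}$, so the two functionals agree on every $z\in B_{Z'}\subseteq B_X$.

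Combining these two facts yields $S(B_{Z'},y_i^{*,n},\a_i^{(n)})\subseteq S(B_X,x_i^{*,n},\a_i^{(n)})$ for every $(n,i)$, and therefore
$$C'_n:=\sum_{i=1}^{k_n}\la_i^{(n)}\,S(B_{Z'},y_i^{*,n},\a_i^{(n)})\subseteq C_n,$$
so $C'_n$ is a non-empty convex combination of slices of $B_{Z'}$ with $\mathrm{diam}(C'_n)<1/n$. Since $n$ is arbitrary, $Z'$ has $BSCSP$. The only step that needs any real care is the simultaneous pull-back: all the countably many functionals witnessing $BSCSP$ at every scale $1/n$ must be transferred into the dual of one single separable subspace. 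This is precisely the content of Lemma~\ref{sep lem}; once it is invoked, the two defining properties of a Hahn--Banach extension operator make the remainder of the argument a direct verification.
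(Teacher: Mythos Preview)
Your proof is correct and follows essentially the same approach as the paper's own argument: collect countably many witness functionals for $BSCSP$ at scales $1/n$, invoke Lemma~\ref{sep lem} to pull them back through a Hahn--Banach extension operator into the dual of a single separable superspace, and then use the two defining properties of $T$ to show the transferred slices sit inside the original ones. The only addition you make beyond the paper is the explicit remark that $\|y_i^{*,n}\|=1$ guarantees non-emptiness of the slices in $B_{Z'}$, which the paper leaves implicit.
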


\begin{proof}
Let $Y$ be any separable subspace of $X.$ Since $X$ has $BSCSP,$  for each $n\in \mathbb{N}$ there exists
 $k(n)\in \mathbb{N}$
such that $$\mathrm{diam} \  \sum_{i=1}^{k(n)} \lambda_{n,i} \ S(B_X,x_{n,i}^*,\alpha_{n,i}) <\frac{1}{n}$$
where $x_{n,1}^*, x_{n,2}^*, \ldots, x_{n,k(n)}^* \in S_{X^*}$, $\alpha_{n,1}, \alpha_{n,2}, \ldots, \alpha_{n,k(n)} \in \mathbb{R}^+$ and $\lambda_{n,1}, \lambda_{n,2}, \ldots, \lambda_{n,k(n)} >0$ with $\sum_{i=1}^{k(n)} \lambda_{n,i} =1.$
Consider separable subspace $F$= $\overline{span}$ $\bigcup_{n=1}^{\infty} \{x_{n,1}^*, x_{n,2}^*, \ldots x_{n,k(n)}^* \}$ in $X^*$. Then, by Lemma $\ref{sep lem}$ there exists a separable subspace $Z$ in $X$ such that $Y\subset Z$ and a Hahn-Banach extension operator $T:Z^*\rightarrow X^*$ such that $F\subset T(Z^*).$ Now we will show that $Z$ has $BSCSP$. So let $\varepsilon >0$. Choose $m\in \mathbb{N}$ such that $\frac{1}{m}<\varepsilon$.
 Then for each $i=1,2,\ldots, k(m)$ we have $x_{m,i}^*= T(z_{m,i}^*)$ for some $z_{m,i}^*\in S_{Z^*}$.
  Consider convex combination of slices $\sum_{i=1}^{k(m)} \lambda_{m,i} \ S(B_Z,z_{m,i}^*,\alpha_{m,i})$ of $B_Z.$
  Observe that
  \begin{equation}\notag
\begin{split}
\sum _{i=1}^{k(m)} \lambda_{m,i} \ S(B_Z,z_{m,i}^*,\alpha_{m,i})= \sum _{i=1}^{k(m)} \lambda_{m,i} \ \{ z\in B_Z : z_{m,i}^*(z)>1-\alpha_{m,i}\}\\
\hspace{3 cm}=\sum _{i=1}^{k(m)} \lambda_{m,i} \ \{ z\in B_Z : x_{m,i}^*(z)>1-\alpha_{m,i}\}\\
\subset \sum _{i=1}^{k(m)} \lambda_{m,i} \ S(B_X,x_{m,i}^*,\alpha_{m,i})\hspace{1.8 cm}
\end{split}
\end{equation}
  Thus, diam $\sum _{i=1}^{k(m)} \lambda_{m,i} \ S(B_Z,z_{m,i}^*,\alpha_{m,i}) \leqslant$ diam $\sum _{i=1}^{k(m)} \lambda_{m,i} \ S(B_X,x_{m,i}^*,\alpha_{m,i})< \frac{1}{m} <\varepsilon.$
Hence $Z$ has $BSCSP.$
\end{proof}

Arguing similarly as above, we have the following Corollary. 
\begin{corollary}
 $BDP$ is a separably determined property.
\end{corollary}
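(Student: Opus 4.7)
The plan is to run the proof of the preceding theorem in the special case $k(n)=1$, which collapses the convex combination to a single slice and leaves the rest of the argument intact. So I would start by letting $Y$ be an arbitrary separable subspace of $X$, and using $BDP$ to pick, for each $n\in\N$, a functional $x_n^*\in S_{X^*}$ and $\a_n>0$ with
\[
\mathrm{diam}\,S(B_X,x_n^*,\a_n)<\frac{1}{n}.
\]
Then I would set $F=\ov{\mathrm{span}}\,\{x_n^* : n\in\N\}$, a separable subspace of $X^*$.

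Next I invoke Lemma \ref{sep lem} with the pair $(Y,F)$ to obtain a separable subspace $Z\supset Y$ of $X$ and a Hahn--Banach extension operator $T:Z^*\to X^*$ with $F\ci T(Z^*)$. For each $n$, I choose $z_n^*\in S_{Z^*}$ with $T(z_n^*)=x_n^*$; this is possible because $T$ is an isometry and $\|x_n^*\|=1$. The key inclusion is then the one-slice analog of the display in the theorem: since $z_n^*(z)=(Tz_n^*)(z)=x_n^*(z)$ for every $z\in Z\ci X$,
\[
S(B_Z,z_n^*,\a_n)=\{z\in B_Z : x_n^*(z)>1-\a_n\}\ci S(B_X,x_n^*,\a_n),
\]
whence $\mathrm{diam}\,S(B_Z,z_n^*,\a_n)<1/n$. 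Given any $\e>0$, choosing $n$ with $1/n<\e$ shows that $B_Z$ has a slice of diameter less than $\e$, so $Z$ has $BDP$.

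There is essentially no new obstacle beyond what was handled in the theorem: Lemma \ref{sep lem} supplies exactly the separable enlargement of $Y$ together with the Hahn--Banach extension operator needed to realize the chosen $X^*$-functionals as functionals on $Z$, and the slice inclusion uses only that $T$ preserves values on $Z$ and norms. In fact the whole argument is just the $k(n)=1$ specialization of the preceding proof, which is why the author is content to state it as a corollary.
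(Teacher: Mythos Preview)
Your proposal is correct and is exactly the approach the paper intends: the corollary is obtained by specializing the preceding $BSCSP$ argument to $k(n)=1$, using Lemma~\ref{sep lem} to produce the separable enlargement $Z\supset Y$ and the Hahn--Banach extension operator $T$, and then observing the slice inclusion $S(B_Z,z_n^*,\a_n)\ci S(B_X,x_n^*,\a_n)$. There is nothing to add.
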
 

\begin{theorem}
 $BHP$ is a separably determined property.
\end{theorem}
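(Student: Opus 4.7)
The plan is to adapt the proof of the $BSCSP$ case, with one extra bookkeeping step: since a basic relatively weakly open subset of $B_X$ is described not only by finitely many functionals but also by a base point around which the neighborhood is centered, the separable subspace $Z$ I construct must contain all of these base points in addition to capturing the relevant functionals through a Hahn--Banach extension operator.

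Let $Y \subset X$ be separable. For each $n \in \mathbb{N}$, by $BHP$ there is a nonempty relatively weakly open $U_n \subset B_X$ with $\mathrm{diam}(U_n) < 1/n$. Pick $y_n \in U_n$; by shrinking if necessary I may assume $U_n$ has the basic form
\[ U_n = \{ x \in B_X : |x_{n,i}^*(x) - x_{n,i}^*(y_n)| < \varepsilon_n,\ i = 1, \ldots, k(n)\} \]
for some $\varepsilon_n > 0$ and $x_{n,1}^*, \ldots, x_{n,k(n)}^* \in S_{X^*}$. Let
\[ L = \overline{\mathrm{span}}\bigl( Y \cup \{y_n : n \in \mathbb{N}\}\bigr), \qquad F = \overline{\mathrm{span}} \bigcup_{n=1}^\infty \{x_{n,1}^*, \ldots, x_{n,k(n)}^*\}, \]
both of which are separable. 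By Lemma \ref{sep lem} there is a separable subspace $Z \supset L$ of $X$ admitting a Hahn--Banach extension operator $T : Z^* \to X^*$ with $F \subset T(Z^*)$. For every $n, i$ pick $z_{n,i}^* \in S_{Z^*}$ with $T z_{n,i}^* = x_{n,i}^*$, so that $z_{n,i}^*(z) = x_{n,i}^*(z)$ for all $z \in Z$.

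Now I claim $Z$ has $BHP$. Given $\varepsilon > 0$, choose $m$ with $1/m < \varepsilon$ and consider
\[ W_m = \{ z \in B_Z : |z_{m,i}^*(z) - z_{m,i}^*(y_m)| < \varepsilon_m,\ i = 1, \ldots, k(m)\}, \]
which is a nonempty (contains $y_m$, noting $y_m \in L \subset Z$ and $\|y_m\| \leq 1$) relatively weakly open subset of $B_Z$. Since for $z \in Z$ the restrictions satisfy $z_{m,i}^*(z) = x_{m,i}^*(z)$ and likewise for $y_m$, every $z \in W_m$ lies in $U_m$, giving $W_m \subset U_m$ and thus $\mathrm{diam}(W_m) \leq \mathrm{diam}(U_m) < 1/m < \varepsilon$.

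The only real subtlety beyond the $BSCSP$ proof is recognizing that the center points $y_n$ of the weakly open neighborhoods must be absorbed into the separable subspace $L$ before invoking Lemma \ref{sep lem}; without this, the inequalities $|z_{m,i}^*(z) - z_{m,i}^*(y_m)| < \varepsilon_m$ could not be translated back into the defining inequalities of $U_m$. Once this is observed, the Hahn--Banach extension operator does all the remaining work, and the small-diameter estimate transfers verbatim.
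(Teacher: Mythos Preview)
Your proof is correct, and it is in fact cleaner than the paper's. Both arguments apply Lemma~\ref{sep lem} to the same separable space $F$ of functionals, but they differ in how they handle the base points $y_n$ (the paper calls them $x_n$) of the basic weakly open neighborhoods. You absorb these points into the separable subspace $L$ before invoking Lemma~\ref{sep lem}, so that $y_m \in Z$ automatically and the set $W_m$ you define is visibly nonempty and contained in $U_m$. The paper, by contrast, applies Lemma~\ref{sep lem} only with $L = Y$, so the point $x_m$ need not lie in $Z$; it then passes to $T^*x_m \in B_{Z^{**}}$ and uses Goldstine's theorem ($w^*$-density of $B_Z$ in $B_{Z^{**}}$) to find $z_m \in B_Z$ approximating $T^*x_m$ well enough that a smaller neighborhood centered at $z_m$ still sits inside $W_m$. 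Your approach buys simplicity---no bidual, no approximation, no halving of the radius---at no cost; the paper's approach would be necessary only if for some reason one could not enlarge $L$ beyond $Y$, which is not the case here.
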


\begin{proof}
Let $Y$ be any separable subspace of $X.$  Since $X$ has $BHP,$ for each $n\in \mathbb{N}$ there exists a nonempty weakly open set $$W_n =\{ x\in B_X : |x_{n,i}^*(x-x_n)|<\delta_n ,\  i=1,\ldots, k(n)\}$$ 
of $B_X$ with diameter less than $\frac{1}{n},$ where $x_n\in B_X$ and $x_{n,1}^*,x_{n,2}^*,\ldots,x_{n,k(n)}^* \in X^*.$
 Consider separable subspace $F$= $\overline{span}$ $\bigcup_{n=1}^{\infty} \{x_{n,1}^*, x_{n,2}^*, \ldots x_{n,k(n)}^* \}$ in $X^*$.
  Then by Lemma $\ref{sep lem}$
there exists a separable subspace $Z$ in $X$ such that $Y\subset Z$ and a Hahn-Banach extension operator $T:Z^*\rightarrow X^*$ such that $F\subset T(Z^*).$ Now we will show that $Z$ has $BHP$. So let $\varepsilon >0$. Choose $m\in \mathbb{N}$ such that $\frac{1}{m}<\varepsilon$.
 Then for each $i=1,2,\ldots, k(m)$ we have
 $x_{m,i}^*= T(z_{m,i}^*)$ for some $z_{m,i}^*\in Z^*.$ 
Also $T^*x_m\in B_{Z^{**}}.$ Choose $\alpha_m>0$ such that $2\alpha_m<\delta_m.$
Since $B_Z$ is $w^*$ dense in $B_{Z^{**}},$ then 
$$\{ z^{**}\in B_{Z^{**}} : |z_{m,i}^*(z^{**}-T^*x_m)|<\alpha_m, i=1,\ldots, k(m)\} \bigcap B_Z \neq \emptyset$$
Choose $z_m$ from above intersection. Then $z_m\in B_Z$ and
\begin{equation}
|z_{m,i}^*(z_m)-T^*x_m(z_{m,i}^*)|<\alpha_m \quad \forall i=1,2,\ldots,k(m).
\end{equation}
 which gives 
 \begin{equation}\notag
 \begin{split}
 |x_{m,i}^*(z_m)-x_{m,i}^*(x_m)|=|x_{m,i}^*(z_m)-x_m(x_{m,i}^*)|
 =|z_{m,i}^*(z_m)-x_m(T z_{m,i}^*)|\hspace{2 cm}\\
 =|z_{m,i}^*(z_m)-T^*x_m(z_{m,i}^*)|\hspace{1.8 cm}\\
 <\alpha_m \quad \forall i=1,2,\ldots,k(m).\hspace{1.6 cm}
  \end{split}
 \end{equation}
Therefore consider weakly open set 
$$V_m =\{ z\in B_Z : |z_{m,i}^*(z-z_m)|<\alpha_m , i=1,\ldots, k(m)\}$$ 
 of $B_Z.$ Then $V_m\subset W_m.$
 Indeed, let $z\in V_m,$ then for each $i=1,2,\ldots ,k(m)$,
 \begin{equation}\notag
 \begin{split}
|x_{m,i}^*(z)-x_{m,i}^*(x_m)|\leqslant |x_{m,i}^*(z)-x_{m,i}^*(z_m)|+ |x_{m,i}^*(z_m)-x_{m,i}^*(x_m)|\\
= |z_{m,i}^*(z)-z_{m,i}^*(z_m)|+ |x_{m,i}^*(z_m)-x_{m,i}^*(x_m)|\\
<\delta_m.\hspace{5.8 cm} 
\end{split}
 \end{equation}
Therefore, diam $(V_m)\leqslant $ diam $(W_m)<\frac{1}{m}<\varepsilon.$
Hence $Z$ has $BHP.$
\end{proof}

Next, we explore the stability results of small diameter properties over K$\ddot{o}$the-Bochner spaces. We use some techniques which are similar to the ones used in \cite{LP}. 

\begin{theorem} \label{bochner bhp}
Suppose $(\Omega,\mathcal{A},\mu)$ is a complete, $\sigma$ finite measure space. Let $E$ be a K$\ddot{o}$the function space over $(\Omega,\mathcal{A},\mu)$ and  $X$ be a Banach space such that the simple functions are dense in $E(X).$ Suppose  $E(X)$ has $BHP.$ Then $X$ has $BHP.$ 
\end{theorem}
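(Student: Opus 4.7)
The plan is to transfer BHP from $E(X)$ back to $X$ by pulling back a small-diameter weakly open subset of $B_{E(X)}$ through an affine weak-to-weak continuous embedding of $B_X$ built from a simple function lying in that weakly open set.

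Fix $\varepsilon>0$ and a parameter $\delta>0$ to be chosen later. By the BHP of $E(X)$, there is a nonempty relatively weakly open $W\subset B_{E(X)}$ with $\operatorname{diam}(W)<\delta$. Since $W$ is in particular norm-open and simple functions are norm-dense in $E(X)$ by hypothesis, $W$ contains a simple function $f_0=\sum_{j=1}^{n}x_j\chi_{A_j}$ with $x_j\in X\setminus\{0\}$ and the $A_j$ pairwise disjoint of finite measure. For an index $j_0$, I would define the affine map $\Phi_{j_0}\colon X\to E(X)$ by
\[
\Phi_{j_0}(y)=\sum_{j\neq j_0}x_j\chi_{A_j}+\|x_{j_0}\|\,y\,\chi_{A_{j_0}}.
\]
The lattice property of the K\"othe norm on $E$ forces $\Phi_{j_0}(B_X)\subset B_{E(X)}$, and $\Phi_{j_0}$ is weak-to-weak continuous because its linear part is a bounded operator $X\to E(X)$. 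Since $\Phi_{j_0}(x_{j_0}/\|x_{j_0}\|)=f_0\in W$, the set $V_{j_0}:=\Phi_{j_0}^{-1}(W)\cap B_X$ is a nonempty relatively weakly open subset of $B_X$, and the identity
\[
\|\Phi_{j_0}(y)-\Phi_{j_0}(z)\|_{E(X)}=\|x_{j_0}\|\,\|\chi_{A_{j_0}}\|_E\,\|y-z\|_X
\]
yields $\operatorname{diam}(V_{j_0})\leq \delta/(\|x_{j_0}\|\,\|\chi_{A_{j_0}}\|_E)$.

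The main obstacle is controlling the denominator $\|x_{j_0}\|\,\|\chi_{A_{j_0}}\|_E$ from below, so that $\delta$ may be chosen small enough to guarantee $\operatorname{diam}(V_{j_0})<\varepsilon$. I would take $j_0$ to maximize $\|x_j\|\,\|\chi_{A_j}\|_E$; combined with the triangle inequality $\|f_0\|_{E(X)}\leq \sum_j \|x_j\|\,\|\chi_{A_j}\|_E$ in the K\"othe space $E$, this gives $\|x_{j_0}\|\,\|\chi_{A_{j_0}}\|_E\geq \|f_0\|_{E(X)}/n$. The delicate point is that $n$ and $\|f_0\|_{E(X)}$ both depend on $W$, which depends on $\delta$; I expect the proof to resolve this by arranging $f_0\in W$ with $\|f_0\|_{E(X)}$ close to $\sup_{g\in W}\|g\|_{E(X)}$ (bounded below via a scaling argument exploiting that $W$ is weakly open and therefore meets the interior of $B_{E(X)}$) and then invoking BHP of $E(X)$ a second time to produce $\delta$ sufficiently small relative to $\|x_{j_0}\|\,\|\chi_{A_{j_0}}\|_E$, forcing $\operatorname{diam}(V_{j_0})<\varepsilon$ and establishing BHP for $X$.
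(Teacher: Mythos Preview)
Your single--piece embedding $\Phi_{j_0}$ is set up correctly, and you have correctly identified the obstruction: the denominator $\|x_{j_0}\|\,\|\chi_{A_{j_0}}\|_E$ depends on the simple function $f_0$, which in turn depends on $W$ and hence on $\delta$. Your proposed resolution, however, does not close this gap. ``Invoking BHP of $E(X)$ a second time'' cannot help, because each new application of BHP produces a \emph{new} weakly open set with a \emph{new} simple approximant and a new value of $n$ and of $\|x_{j_0}\|\,\|\chi_{A_{j_0}}\|_E$; there is no mechanism that forces these quantities to stabilise or to be bounded below uniformly in $\delta$. The bound $\max_j \|x_j\|\,\|\chi_{A_j}\|_E \geq \|f_0\|_{E(X)}/n$ is useless as long as $n$ is uncontrolled, and nothing in your outline controls it.

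The paper avoids this circularity by two changes. First, it argues by contradiction: if $X$ fails BHP, fix a single $\varepsilon>0$ witnessing the failure, and only then choose in $B_{E(X)}$ a finite intersection of slices of diameter less than $\varepsilon$ containing a simple $f=\sum_k x_k\chi_{A_k}$ with $\|f\|_{E(X)}=1$. Second, and this is the key idea you are missing, it perturbs \emph{all} coordinates simultaneously rather than a single one. For each $k$ one uses the failure of BHP in $X$ to find $y_k,z_k$ in the appropriate weakly open subset of $\|x_k\|B_X$ with $\|y_k-z_k\|>\|x_k\|\varepsilon$, and sets $g=\sum_k y_k\chi_{A_k}$, $h=\sum_k z_k\chi_{A_k}$. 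Then the K\"othe lattice property gives
\[
\|g-h\|_{E(X)}=\Bigl\|\sum_k \|y_k-z_k\|\chi_{A_k}\Bigr\|_E>\Bigl\|\sum_k \varepsilon\|x_k\|\chi_{A_k}\Bigr\|_E=\varepsilon\|f\|_{E(X)}=\varepsilon,
\]
contradicting $\operatorname{diam} W<\varepsilon$. The point is that perturbing all pieces at once lets you compare against the full norm $\|f\|_{E(X)}$ rather than the individual (uncontrollable) contributions $\|x_j\|\,\|\chi_{A_j}\|_E$; this is exactly what your single--piece map $\Phi_{j_0}$ cannot do.
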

\begin{proof}
We prove by contradiction.
Suppose $X$ does not have $BHP.$ Then there exists $\varepsilon >0$ such that the diameter of any nonempty relatively weakly open subset of $B_X$ is greater than $\varepsilon.$ Since $E(X)$ has $BHP,$ 
there exist  slices $S(B_{E(X)},f_i^*,\alpha_i)$  $(1\leqslant i \leqslant n)$ of $B_{E(X)}$ such that $\bigcap_{i=1}^{n} S(B_{E(X)},f_i^*,\alpha_i)\neq \emptyset$ and diameter of $\bigcap_{i=1}^{n} S(B_{E(X)},f_i^*,\alpha_i) $ is less than $\varepsilon.$
 Choose $f\in \bigcap_{i=1}^{n} S(B_{E(X)},f_i^*,\alpha_i)$ with $\|f\|_{E(X)}=1.$ Since simple functions are dense in $E(X)$, without loss of generality we can assume that 
 $$f=\sum_{k=1}^{m} x_k \chi_{A_k}$$ 
 where $A_k$ are disjoint sets in $\mathcal{A}$ and $x_k\neq 0$  for all $k=1,2,\ldots,m.$ For each $i=1,\ldots,n$ and $k=1,\ldots,m,$ we define continuous, linear map $x_{ik}^*(x)=f_i^*(x\chi_{A_k}).$
 For each k, consider relatively weakly open subset 
 $$W_k= \bigcap_{i=1}^{n} S(\|x_k\|B_X,x_{ik}^*,\|x_{ik}^*\| \|x_{k}\|-x_{ik}^*(x_{k})+\alpha_{ik})$$ 
 of $\|x_k\|B_X,$ where $\alpha_{ik}>0$ are such that $\sum_{k=1}^{m} \alpha_{ik} \leqslant f_i^*(f)-(1-\alpha_{i}).$  Observe that $x_k\in W_k,$ and hence $W_k\neq \emptyset.$ Since any nonempty relatively weakly open subset of $B_X$ has diameter greater than $\varepsilon,$ then we claim that diameter of $W_k$ is greater than $\|x_k\| \varepsilon.$ 
 Indeed, consider $V_k= \bigcap_{i=1}^{n} S(B_X,x_{ik}^*,\frac{\|x_{ik}^*\| \|x_{k}\|-x_{ik}^*(x_{k})+\alpha_{ik}}{\|x_k\|}).$ Observe $\frac{x_k}{\|x_k\|}\in V_k.$ Thus $V_k$ is a nonempty weakly open subset of $B_X$ and hence by our assumption diam $(V_k)>\varepsilon.$ Therefore, there exist $y_{k1},y_{k2}\in V_k$ such that $\|y_{k1} - y_{k2}\|>\varepsilon.$ Observe that 
\begin{equation} \label{kothe 1}
\begin{split}
x_{ik}^*(\|x_k\| y_{k1})= \|x_k\| x_{ik}^*(y_{k1})>\|x_k\| \Big(\|x_{ik}^*\| - \frac{\|x_{ik}^*\| \|x_{k}\|-x_{ik}^*(x_{k})+\alpha_{ik}}{\|x_k\|}\Big)\\
 = x_{ik}^*(x_k)-\alpha_{ik} \quad \forall i=1,\ldots,n. \hspace{1.8 cm}
\end{split}
\end{equation} 
and 
\begin{equation}\label{kothe 2}
\Big\| \|x_k\|y_{k1} - \|x_k\|y_{k2} \Big\|=\|x_k\|  \|y_{k1}-y_{k2}\|>\|x_k\| \varepsilon.
\end{equation} 
Thus, combining $(\ref{kothe 1})$ and $(\ref{kothe 2})$ we have $\|x_k\|y_{k1},\|x_k\|y_{k2} \in W_k$ and $\|$ $\|x_k\|y_{k1} - \|x_k\|y_{k2}$ $\| > \|x_k\| \varepsilon.$ Hence, our claim is true.
 Therefore, there exist $y_k,z_k\in W_k$ such that $\|y_k-z_k\|>\|x_k\|\varepsilon.$ Define $$g=\sum_{k=1}^{m} y_k \chi_{A_k}$$ 
  $$h=\sum_{k=1}^{m} z_k \chi_{A_k}.$$
  
 Observe that,
 since $y_k\in W_k,$ then $\|y_k\|\leqslant \|x_k\|$ $\forall k=1,\ldots,m.$ 
Therefore 
$$\|g\|_{E(X)} = \Big\| \|y_k\| \chi_{A_k} \Big\|_E \leqslant \Big\| \|x_k\| \chi_{A_k} \Big\|_E
= \|f\|_{E(X)}=1.$$ 
Also for each $i=1,\ldots,n$ 
  \begin{equation}\notag
  \begin{split}
  f_i^*(g)
=\sum_{k=1}^{m} x_{ik}^*(y_k) 
>\sum_{k=1}^{m} (x_{ik}^*(x_k)-\alpha_{ik})
=\sum_{k=1}^{m} x_{ik}^*(x_k)-\sum_{k=1}^{m} \alpha_{ik} \hspace{4.5 cm} \\
= \sum_{k=1}^{m} f_i^*(x_k\chi_{A_k}) - \sum_{k=1}^{m} \alpha_{ik} 
 = f_i^*(f)-\sum_{k=1}^{m} \alpha_{ik} 
 \geqslant 1-\alpha_i 
  \end{split}  
  \end{equation}
  We prove similarly for $h.$ Hence, $g,h \in \bigcap_{i=1}^{n} S(B_{E(X)},f_i^*,\alpha_i).$
Finally,  $$\|g-h\|_{E(X)}= \| \sum_{k=1}^{m}( y_k - z_k) \chi_{A_k}\| 
  = \Big\| \|y_k - z_k\| \chi_{A_k} \Big\|_E
  >\Big\| \|x_k\|\varepsilon \chi_{A_k} \Big\|_E
  = \varepsilon \|f\|_{E(X)} = \varepsilon$$
Thus, diam $\bigcap_{i=1}^{n} S(B_{E(X)},f_i^*,\alpha_i)\geqslant \varepsilon,$ a contradiction.
Hence, $X$ has $BHP.$
\end{proof}  


Arguing similarly as above for $n=1$, we have the following. 
\begin{corollary} \label{bochner bdp}
Suppose $(\Omega,\mathcal{A},\mu)$ is a complete, $\sigma$ finite measure space. Let $E$ be a K$\ddot{o}$the function space over $(\Omega,\mathcal{A},\mu)$ and $X$ be a Banach space such that simple functions are dense in $E(X).$ Also let $E(X)$ have $BDP.$ Then $X$ has $BDP.$
\end{corollary}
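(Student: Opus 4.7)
The plan is to mirror the proof of Theorem~\ref{bochner bhp} verbatim, specialized to the case $n=1$, since a $BDP$ failure just says every \emph{single} slice of $B_X$ has diameter bounded below, which is a weaker hypothesis than what one gets from a $BHP$ failure. So the argument should actually run more cleanly.

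More concretely, I would argue by contradiction. Suppose $X$ lacks $BDP$, so there is some $\varepsilon>0$ such that every slice of $B_X$ has diameter greater than $\varepsilon$. Since $E(X)$ has $BDP$, pick a slice $S(B_{E(X)},f^*,\alpha)$ of diameter less than $\varepsilon$, and choose $f$ in this slice with $\|f\|_{E(X)}=1$. Using density of simple functions, reduce to the case $f=\sum_{k=1}^m x_k\chi_{A_k}$ with the $A_k$ disjoint and the $x_k$ nonzero. Define the continuous linear functional $x_k^*(x)=f^*(x\chi_{A_k})$ on $X$ for each $k$.

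Next, exactly as in the theorem, form the single slice
\[
W_k \;=\; S\!\bigl(\|x_k\|B_X,\; x_k^*,\; \|x_k^*\|\|x_k\|-x_k^*(x_k)+\alpha_k\bigr)
\]
of $\|x_k\|B_X$, where $\alpha_k>0$ are chosen so that $\sum_{k=1}^m \alpha_k\le f^*(f)-(1-\alpha)$. Note $x_k\in W_k$, so $W_k$ is nonempty. By rescaling (exactly as in the theorem's proof involving $V_k$), the hypothesis that every slice of $B_X$ has diameter greater than $\varepsilon$ forces $\mathrm{diam}(W_k)>\|x_k\|\varepsilon$. Pick $y_k,z_k\in W_k$ with $\|y_k-z_k\|>\|x_k\|\varepsilon$.

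Finally, set $g=\sum_{k=1}^m y_k\chi_{A_k}$ and $h=\sum_{k=1}^m z_k\chi_{A_k}$. The same three computations as in Theorem~\ref{bochner bhp} show $\|g\|_{E(X)},\|h\|_{E(X)}\le 1$; that $f^*(g)>1-\alpha$ and $f^*(h)>1-\alpha$, so both lie in $S(B_{E(X)},f^*,\alpha)$; and that $\|g-h\|_{E(X)}>\varepsilon$, contradicting the small diameter of the slice. Hence $X$ has $BDP$. The main (minor) obstacle is purely bookkeeping: tracking that the single-index case of the auxiliary slice argument still produces the lower bound $\|x_k\|\varepsilon$ on the diameter of $W_k$, but this is immediate from the $BDP$ failure hypothesis since $W_k$ (being a slice of a scaled ball) corresponds, after normalization, to a slice of $B_X$.
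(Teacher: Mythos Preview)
Your proposal is correct and follows exactly the approach the paper intends: the authors simply write ``Arguing similarly as above for $n=1$,'' and your write-up is precisely that specialization of the proof of Theorem~\ref{bochner bhp}. The bookkeeping you flag about the diameter of $W_k$ is handled verbatim by the $V_k$ rescaling argument with a single functional in place of an intersection.
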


{\bf Question~:}
Suppose $(\Omega,\mathcal{A},\mu)$ is a complete, $\sigma$ finite measure space. Let $E$ be a K$\ddot{o}$the function space over $(\Omega,\mathcal{A},\mu)$ and $X$ be a Banach space such that simple functions are dense in $E(X).$ Also let $E(X)$ have $BSCSP.$ Will that imply $X$ has  $BSCSP$?

\begin{Acknowledgment}
	This work was done when the first author was visiting the Department of Pure Mathematics, University of Calcutta. She is particularly grateful to Professor Sunil Kumar Maity for his support  and hospitality.
 The second  author's research is funded by the National Board for Higher Mathematics (NBHM), Department of Atomic Energy (DAE), Government of India, Ref No: 0203/11/2019-R$\&$D-II/9249.
\end{Acknowledgment}

\end{document}